\newtheorem{theorem}{Theorem}[section]
\newtheorem{corollary}[theorem]{Corollary}
\newtheorem{lemma}[theorem]{Lemma}
\title[Prime order elements]{Finite simple groups have many classes of prime order elements}
\author{Jessica Anzanello}
\address{Jessica Anzanello\\
Dipartimento di Matematica e Applicazioni\\ University of Milano-Bicocca\\
Via Cozzi 55, 20125 \\Milano, Italy}
\email{j.anzanello@campus.unimib.it}
\author{Pablo Spiga}
\address{Pablo Spiga\\
Dipartimento di Matematica e Applicazioni\\ University of Milano-Bicocca\\
Via Cozzi 55, 20125 \\Milano, Italy}
\email{pablo.spiga@unimib.it}
\begin{document}

\begin{abstract}
Let $T$ be a finite non-abelian simple group. Giudici, Morgan and Praeger have shown that the order of $T$ is bounded above by a function depending on the maximum number of $\mathrm{Aut}(T)$-classes of elements of $T$ of prime-power order. In this note, we strengthen this result by showing, in particular, that “prime-power” can be replaced by “prime”.
\end{abstract}

\subjclass[2020]{Primary 20D06, 20E32}
\keywords{finite simple group, conjugacy classes, primitive prime divisor, prime}        

\maketitle

\section{Introduction}
Let $T$ be a non-abelian simple group and let $x$ be an element of $T$. The $\mathrm{Aut}(T)$-class of $x$ is $\{x^\sigma \mid \sigma \in \mathrm{Aut}(T)\}$. In~\cite{GMP}, Giudici, Morgan and Praeger obtained an upper bound on the order of $T$ in terms of $p$-elements. Indeed, given a prime number $p$, let $m_p(T)$ be the number of $\mathrm{Aut}(T)$-classes of $p$-elements in $T$, and let
\[
m(T) = \max_{p\ \textrm{prime}} m_p(T).
\]
Theorem~1.1 in~\cite{GMP} shows that there exists an increasing function $f:\mathbb{N}\to \mathbb{N}$ such that the order of $T$ is at most $f(m(T))$.

We do not give in this note applications of this result, nor do we discuss its relation to previous remarkable results bounding the order of a finite  group in terms of its conjugacy classes, such as Pyber’s theorem~\cite{Pyber} stating that a group of order $n$ has at least $O(\log n/(\log \log n)^8)$ conjugacy classes. We instead refer the reader to the introductory section of~\cite{GMP}, which provides an excellent account and presents interesting applications of $m(T)$.

In this note, we improve upon the work of Giudici, Morgan and Praeger by considering only elements of prime order. 
 Indeed, given a prime number $p$, let $\mathrm{mpr}_p(T)$ be the number of $\mathrm{Aut}(T)$-classes of elements of order $p$ in $T$, and let
\[
\mathrm{mpr}(T) = \max_{p\ \textrm{prime}} \mathrm{mpr}_p(T).
\]
Clearly, $\mathrm{mpr}_p(T)\le m_p(T)$ and $\mathrm{mpr}(T)\le m(T)$.
\begin{theorem}\label{thrm:1}
There exists an increasing function $f:\mathbb{N}\to \mathbb{N}$ such that, for a finite non-abelian simple group $T$, the order of $T$ is at most $f(\mathrm{mpr}(T))$.
\end{theorem}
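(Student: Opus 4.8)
The plan is to prove the equivalent quantitative statement that, for every $k\in\mathbb{N}$, only finitely many finite non-abelian simple groups $T$ satisfy $\mathrm{mpr}(T)\le k$. Granting this, I set $f(k)=\max\{|T|:T\text{ simple},\ \mathrm{mpr}(T)\le k\}$ and then replace $f$ by $k\mapsto\max_{j\le k}f(j)$ to make it increasing; since $\mathrm{mpr}(T)\le m(T)$ this indeed refines the Giudici--Morgan--Praeger bound. I would not try to reduce to their theorem via an inequality $m(T)\le g(\mathrm{mpr}(T))$: given their result, such an inequality is \emph{equivalent} to the theorem being proved (both amount to bounding $|T|$ in terms of $\mathrm{mpr}(T)$), so it is no easier. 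Instead I would run through the classification of finite simple groups and, for each large group, exhibit a \emph{single} prime $p$ with more than $k$ $\mathrm{Aut}(T)$-classes of elements of order exactly $p$. The sporadic groups and the Tits group are finite in number and may be discarded.

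\textbf{The easy families.} For alternating groups the bound is immediate: once $\mathrm{Aut}(A_n)=S_n$, the products of $a$ disjoint $3$-cycles with $1\le a\le\lfloor n/3\rfloor$ are even, pairwise non-conjugate, of order $3$, so $\mathrm{mpr}_3(A_n)\ge\lfloor n/3\rfloor$ and $n$ is bounded in terms of $k$. For groups of Lie type I would first bound the (twisted) rank $\ell$ over $\mathbb{F}_q$, $q=p^f$, using unipotent elements in the defining characteristic $p$: the unipotent classes of order exactly $p$ are those whose combinatorial (Jordan-type) datum has all parts at most $p$ and is non-trivial, and their number grows without bound with $\ell$ (already the parts-at-most-$2$ count does). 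The key point is that field and graph automorphisms preserve this combinatorial datum and hence fix each such class, while diagonal automorphisms fuse them in orbits of size bounded in terms of $\ell$; thus $\mathrm{mpr}_p(T)$ grows with $\ell$ and forces $\ell\le\ell_0(k)$.

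\textbf{The crux: bounded rank, large field.} The main difficulty is the complementary regime, $\ell$ bounded and $q=p^f\to\infty$, where the supply of unipotent classes is bounded and the growth of $q$ must be detected through \emph{semisimple} elements of prime order. Maximal tori of $T$ have orders built from cyclotomic values $\Phi_e(q)$ with $e$ bounded in terms of $\ell$, so I would fix such an $e$ and a prime $r\mid\Phi_e(q)$. The order-$r$ elements of the corresponding torus split into at least $(r-1)/(cf)$ distinct $\mathrm{Aut}(T)$-classes, where $c=c(\ell)$ absorbs the relative Weyl group together with the diagonal and graph automorphisms, and the factor $f$ comes from the field automorphisms, a cyclic group of order $f$ acting on $r$-th roots of unity by $\lambda\mapsto\lambda^p$. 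If $r$ can be taken larger than $2ckf$ we obtain more than $k$ classes of the single prime order $r$, contradicting $\mathrm{mpr}(T)\le k$; so I would assume no such $r$ exists, i.e.\ every prime divisor of every relevant $\Phi_e(q)$ is at most $2ckf$.

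\textbf{Number theory and the obstacle.} To contradict this smoothness I would use primitive prime divisors decisively. A primitive prime divisor $r$ of $q^e-1$ satisfies $r\equiv1\pmod e$ and, as then $r\nmid e$, divides $\Phi_e(q)$ \emph{exactly once}; hence the primitive part of $\Phi_e(q)$ is squarefree with all prime factors $\equiv1\pmod e$. If all of them were at most $2ckf$ there would be only $O_k(1)$ of them (for $f$ large there are no primes $\equiv1\pmod f$ below $2ckf$ beyond the $O_k(1)$ candidates $f+1,2f+1,\dots$), forcing $\Phi_e(q)$ to be polynomially bounded in $f$; this contradicts $\Phi_e(q)\gg q^{\phi(e)}$ once $f$ is large, and bounds $f$ in terms of $k$. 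The residual case, $f$ small and $p$ large, is the genuinely arithmetic one: there $q-1$ and $q+1$ would both be $O_k(1)$-smooth, and I would invoke Størmer's theorem (finiteness of consecutive smooth integers, equivalently finiteness of solutions of the attendant $S$-unit equations) to leave only finitely many $q$. I expect this semisimple analysis to be the main obstacle: the automorphism-fusion bookkeeping must be made uniform across all twisted types, and it is precisely to control the field-automorphism contribution and to force a large prime factor that primitive prime divisors are indispensable; the delicate point is keeping the smoothness threshold from drifting with $q$. With $\ell$, $q$ and the alternating degree all bounded in terms of $k$, and only finitely many sporadic groups, the set $\{T:\mathrm{mpr}(T)\le k\}$ is finite and $f$ exists.
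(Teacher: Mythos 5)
Your overall skeleton (reduce to finiteness of $\{T:\mathrm{mpr}(T)\le k\}$, handle alternating groups by cycle types, bound the Lie rank by unipotent classes, detect large $q$ through semisimple elements of prime order in tori) is reasonable, and the alternating and unipotent parts are essentially sound. But the decisive step --- bounding the field degree $f$ by elementary smoothness considerations --- contains a genuine error. You claim that a primitive prime divisor $r$ of $q^e-1$ divides $\Phi_e(q)$ \emph{exactly once}, so that the primitive part of $\Phi_e(q)$ is squarefree. This is false: $\Phi_5(3)=121=11^2$, and $11$ is a primitive prime divisor of $3^5-1=242$. The standard multiplicity-one lemma applies only to the \emph{non-primitive} prime divisor of $\Phi_e(q)$ (the largest prime factor of $e$); for a primitive prime $r$, the exponent of $r$ in $\Phi_e(q)$ equals the exponent of $r$ in $q^e-1$, which is governed by Wieferich-type phenomena and admits no known unconditional elementary bound. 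Consequently your contradiction collapses: even if $\Phi_e(q)$ has only $O_k(1)$ prime factors, all at most $2ckf$, it could a priori be an enormous power of a single small prime (as large as $p^{O(ef)}$ is consistent with your constraints), and nothing then contradicts $\Phi_e(q)\gg q^{\varphi(e)}$. There is also a smaller bookkeeping slip: a prime dividing $\Phi_e(q)$ with $q=p^f$ need only satisfy $r\equiv 1\pmod e$, not $r\equiv 1\pmod f$; to force residue $1$ modulo $ef$ you must work with $\Phi_{ef}(p)$ over the base $p$, as the paper does with its parameter $b$ --- this is fixable, but the squarefreeness issue is not.

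This missing multiplicity control is precisely why the paper does not argue elementarily at this point: it invokes Stewart's theorem (Theorem~\ref{Stewart}), $P[\Phi_n(p)]>n\exp\left(\frac{\log n}{104\log\log n}\right)$ for $n>C$ --- a deep result resting on $p$-adic linear forms in logarithms --- which directly produces a prime divisor of $\Phi_b(p)$ exceeding any fixed linear function of $b=ad\delta$ once $b$ is large, and hence bounds the rank and the field degree \emph{simultaneously}; the paper therefore never needs your separate unbounded-rank unipotent regime, although that regime is correct on its own. For your residual case ($f$ bounded, $p$ large) the appeal to St\o rmer-type finiteness of consecutive smooth numbers is legitimate and effective, and plays the same role as the paper's use of the Shorey--Tijdeman bound $P[f(q)]\ge c_f\log\log q$ (Lemma~\ref{siegeltheorem}), which comes from the same Baker-method family. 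So the proposal is repairable, but only by importing a Stewart-strength input at exactly the point where you hoped to remain elementary; as written, the inference ``all prime factors of the primitive part are small, hence $\Phi_e(q)$ is polynomially bounded in $f$'' is a genuine gap.
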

As the function $f$ in this result is increasing, we immediately deduce the following.
\begin{corollary}\label{thrm:2}
There exists an increasing function $g:\mathbb{N}\to \mathbb{N}$ such that, for a finite non-abelian simple group $T$, there is a prime $p$  such that $\mathrm{mpr}_p(T)\ge g(|T|)$.
\end{corollary}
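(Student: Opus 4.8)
The plan is to prove Theorem~\ref{thrm:1}, which upgrades the Giudici--Morgan--Praeger bound from ``prime-power'' to ``prime'' order elements. Since we already know from Theorem~1.1 of~\cite{GMP} that $|T| \le f(m(T))$ for an increasing $f$, the cleanest strategy is to show that $m(T)$ is bounded above by a function of $\mathrm{mpr}(T)$; composing the two increasing functions then yields the claim. In other words, the entire burden reduces to controlling the number of $\mathrm{Aut}(T)$-classes of $p$-power-order elements in terms of the number of classes of genuine order-$p$ elements, uniformly over all primes $p$ and all simple $T$.

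First I would split the analysis by the familiar trichotomy of finite simple groups: alternating and sporadic groups, and then groups of Lie type, the last being the substantial case. For the alternating and sporadic families the bound is immediate: sporadic groups form a finite list (so any finite quantity is bounded by a constant), and for $A_n$ the cycle-type combinatorics make both $m(A_n)$ and $\mathrm{mpr}(A_n)$ easy to estimate and compare. The real content lies in the groups of Lie type, where I would fix a prime $p$ and treat separately the defining characteristic $p = \mathrm{char}$ and the cross characteristic $p \ne \mathrm{char}$. In defining characteristic, unipotent classes are controlled by the rank, and the number of classes of $p$-elements is polynomially bounded in the rank while order-$p$ unipotent elements (e.g. long root elements) already force the rank to be large when $\mathrm{mpr}_p$ is large. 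In cross characteristic, the key tool is the theory of \emph{primitive prime divisors} (Zsygmondy primes) of $q^e - 1$: a large field or large rank produces many distinct primitive prime divisors, each contributing a distinct $\mathrm{Aut}(T)$-class of elements of that prime order, so a bound on $\mathrm{mpr}(T)$ forces both the field size and the rank to be bounded.

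The main technical step, and the expected obstacle, is to establish the uniform inequality $m(T) \le h(\mathrm{mpr}(T))$ for Lie type groups, i.e.\ to show that when $\mathrm{mpr}_p(T)$ is small for \emph{every} prime $p$, then $m_p(T)$ is also small for every prime $p$. The difficulty is that a single element of large prime-power order $p^k$ need not be ``witnessed'' by a proportionate number of order-$p$ classes; one must argue that the structural constraints forcing few order-$p$ classes (bounded rank and bounded field, via primitive prime divisors and root-element counts) already impose boundedness of the whole group, after which $m(T)$ is automatically bounded because $T$ itself is. Concretely, I would show that $\mathrm{mpr}(T)$ being at most some constant $c$ forces the Lie rank and the field size $q$ to be bounded in terms of $c$, using Zsygmondy's theorem to produce many primitive prime divisors (hence many prime-order classes) as soon as either parameter grows. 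Once rank and $q$ are bounded, $|T|$ itself is bounded, so there is nothing left to prove; in effect the argument never needs the intermediate quantity $m(T)$ at all, and one obtains $|T| \le \tilde f(\mathrm{mpr}(T))$ directly. Replacing $\tilde f$ by $n \mapsto \max_{k \le n} \tilde f(k)$ makes the bounding function increasing, completing the proof.
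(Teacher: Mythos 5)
There is a genuine gap, and it sits exactly at the step you flag as the ``key tool''. Since $\mathrm{mpr}(T)=\max_p \mathrm{mpr}_p(T)$, producing many \emph{distinct} primitive prime divisors does not help: Zsigmondy primes attached to different exponents $q^{e}-1$ are different primes, so each one only witnesses $\mathrm{mpr}_s(T)\ge 1$ for its own prime $s$, and no single $\mathrm{mpr}_p(T)$ grows. Your sentence ``each contributing a distinct $\mathrm{Aut}(T)$-class of elements of that prime order'' conflates counting primes with counting classes of a \emph{fixed} prime order. To make some $\mathrm{mpr}_s(T)$ large one needs many $\mathrm{Aut}(T)$-classes of elements of the same prime order $s$, and the paper manufactures these inside a single cyclic subgroup: by Lemma~\ref{lemma1}, an element $x$ of prime order $s$ gives $\mathrm{mpr}^\ast(x)=(s-1)/|{\bf N}_{\mathrm{Aut}(T)}(\langle x\rangle):{\bf C}_{\mathrm{Aut}(T)}(x)|$, and Lemma~\ref{algebraically closed} together with $|\mathrm{Aut}(T):B|\le 6$ yields $\mathrm{mpr}^\ast(x)\ge (s-1)/(6ad\delta)$. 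This is useful only if $s$ is much larger than $ad\delta$; plain Zsigmondy guarantees merely $s\equiv 1\pmod b$, i.e.\ $s\ge b+1$ with $b\approx ad\delta$, which gives the useless constant lower bound $\ge 1/12$. The genuinely new ingredients, which your sketch never invokes and which Zsigmondy cannot replace, are quantitative: Stewart's Theorem~\ref{Stewart}, $P[\Phi_b(p)]>b\exp\left(\frac{\log b}{104\log\log b}\right)$, forces $b$ (hence rank times field degree) to be bounded in terms of $\mathrm{mpr}^\ast(T)$, and then Shorey--Tijdeman (Lemma~\ref{siegeltheorem}), $P[\Phi_b(p)]\ge c\log\log p$, bounds the characteristic $p$ once $b$ is bounded. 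Your defining-characteristic discussion cannot rescue the field-size bound either: for bounded rank the number of $\mathrm{Aut}(T)$-classes of order-$p$ unipotent elements stays bounded as $q=p^a$ grows (e.g.\ $\mathrm{PSL}_2(q)$), so without the number-theoretic input the inference ``bounded $\mathrm{mpr}(T)$ forces bounded field size'' simply does not follow from your stated tools.

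Two smaller remarks. First, your opening plan (bound $m(T)$ by a function of $\mathrm{mpr}(T)$ and quote \cite{GMP}) is abandoned mid-proposal in favour of bounding $|T|$ directly; the direct route is indeed what the paper does --- it never passes through $m(T)$, and for Lie type it proves the stronger Theorem~\ref{thrm:3} with $\mathrm{mpr}^\ast$ (where, note, involution or root-element counting is unavailable, since $\mathrm{mpr}^\ast$ of an involution is $1$) --- while alternating groups are handled by cycle types of involutions, giving $\mathrm{mpr}_2(\mathrm{Alt}(m))\ge\lfloor m/4\rfloor$, and sporadics are a finite list, as you say. Second, the statement under review is the corollary: from $|T|\le f(\mathrm{mpr}(T))$ with $f$ increasing, the paper immediately deduces $\mathrm{mpr}_p(T)\ge g(|T|)$ for some prime $p$ by the monotone inversion $g(n)=\min\{m\in\mathbb{N} \mid f(m)\ge n\}$; your write-up stops at the theorem and omits this one-line step, but that omission is cosmetic --- the substantive failure is the cross-characteristic counting described above.
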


Beyond improving upon the work of Giudici, Morgan and Praeger, our results also have an application. Indeed, they constitute one of the key ingredients in establishing a remarkable equivalence between a conjecture of Neumann and Praeger on Kronecker classes in algebraic number fields and a conjecture concerning the clique number of derangement graphs in combinatorics, see~\cite[Section~1.1]{FPS}. We explore this equivalence and apply our results in a forthcoming work. However, in order to obtain this equivalence, we need a strengthening of Theorem~\ref{thrm:1} and Corollary~\ref{thrm:2} for simple groups of Lie type.

Given $x \in T$, we define $\mathrm{mpr}^\ast(x)$ to be the number of generators in $\langle x \rangle$ belonging to distinct $\mathrm{Aut}(T)$-classes. In Lemma~\ref{lemma1}, we give a group-theoretic description of this quantity. In what follows, we let ${\bf o}(x)$ denote the order of $x$. 
Given a prime number $p$, we let
\[
\mathrm{mpr}_p^\ast(T) = \max_{\substack{x \in T \\ {\bf o}(x) = p}} \mathrm{mpr}^\ast(x) \quad \text{and} \quad 
\mathrm{mpr}^\ast(T) = \max_{p\ \textrm{prime}} \mathrm{mpr}_p^\ast(T).
\]
Clearly,
\[
\mathrm{mpr}_p^\ast(T) \le \mathrm{mpr}_p(T) \quad \text{and} \quad \mathrm{mpr}^\ast(T) \le \mathrm{mpr}(T).
\]

When $T$ is the alternating group $\mathrm{Alt}(m)$, we have $\mathrm{mpr}^\ast(T) = 1$, because for every element $x$ of prime order, all non-identity elements of $\langle x \rangle$ are $\mathrm{Aut}(T)$-conjugate to $x$. We show that this behaviour is peculiar to alternating groups.

\begin{theorem}\label{thrm:3}
There exists an increasing function $f:\mathbb{N}\to \mathbb{N}$ such that, for a finite simple group of Lie type $T$, the order of $T$ is at most $f(\mathrm{mpr}^\ast(T))$.
\end{theorem}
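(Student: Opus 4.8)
The plan is to exploit Lemma~\ref{lemma1}, which recasts $\mathrm{mpr}^\ast(x)$, for an element $x$ of prime order $p$, as an index $(p-1)/|K_x|$, where $K_x$ is the subgroup of $(\mathbb{Z}/p\mathbb{Z})^\times \cong \mathrm{Aut}(\langle x\rangle)$ consisting of those $k$ for which $x^k$ is $\mathrm{Aut}(T)$-conjugate to $x$. Thus the task is to produce, for every simple group of Lie type $T$ of large order, a single element $x$ of prime order $p$ for which $K_x$ is small relative to $p$; the resulting lower bound on $\mathrm{mpr}^\ast(x)$ then forces $|T|$ to be bounded in terms of $\mathrm{mpr}^\ast(T)$. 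This is genuinely a statement about powers inside one cyclic group, which is why alternating groups (where all generators of $\langle x\rangle$ fuse) are excluded.

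Writing $q = r^{a}$ for the defining field of $T$ and $\ell$ for its rank, I would choose a degree $e$ for which $T$ contains a cyclic maximal torus $S$ of order divisible by $\Phi_e(q)$ whose relative Weyl group $N_T(S)/C_T(S)$ is cyclic of order a bounded multiple of $e$ --- a Coxeter torus or Singer-type torus, with $e$ comparable to $\ell$. Taking $p$ to be a large primitive prime divisor of $q^e - 1$ and $x \in S$ of order $p$, I would then bound $K_x$ as follows. A generator of the relative Weyl group acts on $S$, and hence on $\langle x\rangle$, as the $q$-power map, so the inner contribution to $K_x$ lies in $\langle q \bmod p\rangle$; diagonal automorphisms are realised by conjugations that fix eigenvalues and therefore also contribute only powers in $\langle q\rangle$; the field automorphisms contribute exactly $\langle r \bmod p\rangle \supseteq \langle q\rangle$; and graph automorphisms or triality contribute a factor of order at most $6$. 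Crucially, because $S$ is cyclic with a cyclic relative Weyl group, the Weyl contribution is absorbed into $\langle r\rangle$ and does not grow like $|W|$. This yields the rank-independent bound $|K_x| \le 6\,\mathrm{ord}_p(r)$ and hence
\[
\mathrm{mpr}^\ast(T) \ge \mathrm{mpr}^\ast(x) = \frac{p-1}{|K_x|} \ge \frac{p-1}{6\,\mathrm{ord}_p(r)} \ge \frac{p-1}{6\, e\, a}.
\]
The twisted and the exceptional groups (all of bounded rank) are handled by the same bookkeeping, reading off the torus orders and relative Weyl groups from the standard description of maximal tori, the only change being the precise twist in the Frobenius action.

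It remains to show that $(p-1)/(6ea) \to \infty$ as $|T| \to \infty$, and this number-theoretic step is where I expect the main difficulty. Concretely, one needs a prime factor $p$ of $q^e - 1$ with $p$ large compared to $ea$, uniformly over all types and all ways in which $|T|$ can grow. When the rank $\ell$ (hence $e$) is large, $q^e - 1$ is enormous and a lower bound of Stewart type on the largest prime factor of $q^e - 1$ comfortably beats the linear quantity $ea$. The delicate regime is bounded rank with growing field, where $e$ and $a$ are constrained but $q = r^{a} \to \infty$: here one must know that the largest prime factor of $q^e - 1$ still tends to infinity faster than $ea$. For an $e$ with $\phi(e) \ge 2$ this follows from the classical fact that an integer polynomial of degree at least two (namely $\Phi_e$) takes only finitely many $B$-smooth values; the genuinely tight case is rank one, $T = \mathrm{PSL}_2(q)$, where only $\Phi_1$ and $\Phi_2$ are available, which I would settle using the Stormer--Lehmer finiteness of pairs of $B$-smooth integers differing by $2$, applied to $q-1$ and $q+1$. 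Assembling these inputs into one monotone function and clearing the finitely many small exceptions (including the Zsygmondy exceptions) completes the argument.
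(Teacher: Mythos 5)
Your proposal is correct in substance and, at the level of architecture, is the same proof as the paper's: pick an element $x$ whose prime order $p$ is a large (primitive) prime divisor of $q^e-1$, use Lemma~\ref{lemma1} to convert a bound on the power-map group $K_x$ into the estimate $\mathrm{mpr}^\ast(x)\ge (p-1)/(6ea)$, then bound the exponent $ea$ via Stewart (Theorem~\ref{Stewart}) and the characteristic via a largest-prime-factor/smooth-values statement, treating $\mathrm{PSL}_2$ separately. Where you genuinely diverge is in the group-theoretic step. You bound $K_x$ by placing $x$ in a cyclic maximal torus $S$ whose relative Weyl group is cyclic of order a bounded multiple of $e$; the paper instead proves the elementary Lemma~\ref{algebraically closed}: a normalizing element of $\mathrm{PGL}_d(q)$ permutes the at most $d$ eigenvalues of a semisimple lift over $\overline{\mathbb{F}}_q$, giving $|{\bf N}_{\mathrm{PGL}_d(q)}(\langle x\rangle):{\bf C}_{\mathrm{PGL}_d(q)}(x)|\le d$ uniformly, after which field and graph automorphisms are absorbed via $B=\mathrm{Aut}(T)\cap\mathrm{P}\Gamma\mathrm{L}_d(q^\delta)$ with $|\mathrm{Aut}(T):B|\le 6$. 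The paper's lemma is more robust than your torus bookkeeping, and this matters: your claim that a cyclic torus with cyclic relative Weyl group is always available is not uniform over all types --- for $\mathrm{P}\Omega_{2n}^{+}(q)$ the relevant primitive prime divisor of $q^{2n-2}-1$ lives in a torus of order dividing $(q^{n-1}+1)(q+1)$, which is neither cyclic nor has cyclic relative Weyl group. Your argument survives there only because the conclusion you actually need (that every power map induced on $\langle x\rangle$ lies in $\langle r\bmod p\rangle$ up to a bounded factor) still holds, e.g.\ inversion is $q^{n-1}$-powering since $p\mid q^{n-1}+1$; the eigenvalue argument delivers exactly this without any torus classification, which is presumably why the paper chose it. Two smaller points of comparison: Stewart's theorem as used here requires a \emph{prime} base, so for $q^e-1$ with $q=r^a$ you must apply it to $\Phi_{n}(r)$ with $n\le ae$, as the paper does with $\Phi_b(p)$; and in the rank-one case the paper applies Shorey--Tijdeman (Lemma~\ref{siegeltheorem}) to $f(x)=x^2-1$, which has two distinct roots, where you invoke St\o rmer--Lehmer for smooth pairs differing by $2$ --- these are interchangeable, and note that your use of the largest prime factor of $q^2-1$ (rather than a primitive one) is essential, since the Mersenne primes form an \emph{infinite} Zsigmondy-exceptional family for $\mathrm{PSL}_2(p)$ and cannot be cleared as ``finitely many small exceptions.'' Finally, your qualitative smooth-values finiteness does suffice, since the theorem is equivalent to the statement that for each $m$ only finitely many $T$ satisfy $\mathrm{mpr}^\ast(T)\le m$, and the paper's introduction shows monotonicity of $f$ is free.
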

As above, since the function $f$ in this result is increasing, we immediately deduce the following.
\begin{corollary}\label{thrm:4}
There exists an increasing function $g:\mathbb{N}\to \mathbb{N}$ such that, for a finite simple group of Lie type $T$, there is an element $x\in T$ having prime order $p$ such that $\mathrm{mpr}^\ast(x)\ge g(|T|)$.
\end{corollary}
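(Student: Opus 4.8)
The plan is to deduce Corollary~\ref{thrm:4} formally from Theorem~\ref{thrm:3}, by passing to a monotone pseudo-inverse of the function $f$; no further group theory is required, since all the content resides in Theorem~\ref{thrm:3}. First I would record that $\mathrm{mpr}^\ast(T)$ is a maximum taken over the finitely many primes dividing $|T|$ and, for each such prime $p$, over the finitely many elements of $T$ of order $p$. Hence this maximum is \emph{attained}: there is a concrete element $x_0\in T$ of some prime order $p_0$ with $\mathrm{mpr}^\ast(x_0)=\mathrm{mpr}^\ast(T)$. It therefore suffices to produce an increasing $g$ with $\mathrm{mpr}^\ast(T)\ge g(|T|)$, for then $x_0$ is the required witness.

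To construct $g$, I would set $g(n)=\min\{m\in\mathbb{N}:f(m)\ge n\}$. This set is nonempty for every $n$: indeed $f$ is unbounded, since $|T|\le f(\mathrm{mpr}^\ast(T))$ while $|T|$ takes arbitrarily large values over the simple groups of Lie type, so a bounded $f$ is impossible. Thus $g$ is well defined, and it is monotone non-decreasing because enlarging $n$ can only shrink the set $\{m:f(m)\ge n\}$ and hence can only raise its minimum. Now Theorem~\ref{thrm:3} gives $|T|\le f(\mathrm{mpr}^\ast(T))$, so the integer $\mathrm{mpr}^\ast(T)$ lies in $\{m:f(m)\ge|T|\}$; taking the minimum of that set yields $\mathrm{mpr}^\ast(T)\ge g(|T|)$. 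Combining with the first paragraph, $\mathrm{mpr}^\ast(x_0)=\mathrm{mpr}^\ast(T)\ge g(|T|)$, which is exactly the claim.

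The one point I would flag is the reading of the word \emph{increasing}. A strictly increasing function $\mathbb{N}\to\mathbb{N}$ satisfies $g(n)\ge g(1)+(n-1)$, hence grows at least linearly; but $g(|T|)\le\mathrm{mpr}^\ast(T)$ is bounded above by the number of conjugacy classes of $T$, which is of far smaller order of magnitude than $|T|$. So no strictly increasing $g$ can work, and \emph{increasing} must be understood throughout as \emph{monotone non-decreasing}; the pseudo-inverse above produces precisely such a $g$. Beyond this bookkeeping I anticipate no genuine obstacle: the argument is the standard inversion, identical to the one yielding Corollary~\ref{thrm:2} from Theorem~\ref{thrm:1}.
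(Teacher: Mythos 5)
Your proposal is correct and takes exactly the paper's route: the paper deduces the corollary ``immediately'' from Theorem~\ref{thrm:3} by inverting the increasing bounding function, which is precisely the monotone pseudo-inverse $g(n)=\min\{m\in\mathbb{N} : f(m)\ge n\}$ you construct, with $\mathrm{mpr}^\ast(T)\in\{m : f(m)\ge |T|\}$ forced by $|T|\le f(\mathrm{mpr}^\ast(T))$. Your supplementary checks --- that the maximum defining $\mathrm{mpr}^\ast(T)$ is attained by an actual element $x_0$ of prime order (via Cauchy), that $f$ is unbounded so $g$ is well defined, and that ``increasing'' must be read as monotone non-decreasing since $g(|T|)\le\mathrm{mpr}^\ast(T)<|T|$ rules out strict increase --- are sound bookkeeping that the paper leaves implicit.
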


Our proof uses similar group-theoretic ideas to those in~\cite{GMP}, but instead of employing classical results on primitive prime divisors, we use Stewart’s work on Lehmer numbers~\cite{Stewart}. Given a positive integer $n$, we denote by $\Phi_n(x)$ the $n^{\mathrm{th}}$ cyclotomic polynomial. 
Given $a \in \mathbb{Z}$, we also let $P[\Phi_n(a)]$ denote the largest prime divisor of $\Phi_n(a)$, with the convention that $P[\Phi_n(a)] = 1$ when $\Phi_n(a) = \pm 1$. 
\begin{theorem}[{See~\cite[Theorem~1.1]{Stewart}}]\label{Stewart}
Let $a$ be a prime number. There exists an absolute constant $C$ such that, for $n > C$,
\[
P[\Phi_n(a)] > n \exp\left(\frac{\log n}{104\log\log n}\right).
\]
\end{theorem}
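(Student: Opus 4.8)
The plan is to bound the largest prime factor $P:=P[\Phi_n(a)]$ from below by exploiting the tension between the \emph{size} of $\Phi_n(a)$ and the severe \emph{arithmetic constraints} on its prime divisors. Since $\Phi_n(a)$ divides $a^n-1$ and $\log|\Phi_n(a)| = \phi(n)\log a\,(1+o(1))$, the integer $\Phi_n(a)$ has size roughly $a^{\phi(n)}$, hence is genuinely large. On the other hand, every prime $p\mid\Phi_n(a)$ with $p\nmid n$ is a primitive prime divisor of $a^n-1$: the multiplicative order of $a$ modulo $p$ equals $n$, forcing $p\equiv 1\pmod n$ and in particular $p>n$. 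The only possible exceptional divisor is the largest prime factor of $n$, and it divides $\Phi_n(a)$ to a power that is easily bounded. Thus all but one prime factor of $\Phi_n(a)$ lie in the arithmetic progression $1\pmod n$.

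First I would argue by contradiction, assuming $P\le n\exp\bigl(\log n/(104\log\log n)\bigr)$, and then bound $\log|\Phi_n(a)|$ from above. Writing $\log|\Phi_n(a)| = \sum_{p\mid\Phi_n(a)} v_p(\Phi_n(a))\log p$, I need two quantitative inputs. The first controls \emph{how many} primes can occur: by the Brun--Titchmarsh inequality, the number of primes $p\le P$ with $p\equiv 1\pmod n$ is at most about $P/(\phi(n)\log(P/n))$. The second controls the \emph{multiplicity} of each prime: for a fixed prime base $a$, the $p$-adic valuation $v_p(\Phi_n(a))=v_p(a^n-1)$ is governed by a $p$-adic linear form in two logarithms, and Yu's $p$-adic analogue of Baker's theorem yields an upper bound of the shape $v_p(a^n-1)\ll \tfrac{p}{\log p}\log n$, the base $a$ being absorbed into the implied constant.

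Combining these, each term $v_p(\Phi_n(a))\log p$ is $\ll p\log n$, and summing over the admissible primes $p\le P$ produces an upper bound for $\log|\Phi_n(a)|$ that is essentially $P\log n/\phi(n)$ up to constants. Confronting this with the lower bound $\log|\Phi_n(a)|\gg\phi(n)\log a$ gives an inequality forcing $P$ to be large relative to $n$, and solving it for $P$ yields the claimed bound. The main obstacle is entirely quantitative: extracting the precise exponent $1/104$ together with the $\log\log n$ in the denominator requires the sharpest available explicit forms of Yu's $p$-adic estimate, alongside a careful optimisation balancing the Brun--Titchmarsh count against the Baker--Yu bound on valuations. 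A secondary technical point is handling the exceptional intrinsic prime and the small values of $n$, which is precisely why the statement is asserted only for $n>C$ with an absolute constant $C$.
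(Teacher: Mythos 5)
First, a point of reference: the paper does not prove this statement at all. It is imported verbatim from Stewart's Acta Mathematica paper \cite[Theorem~1.1]{Stewart}, specialised to prime $a$; the only remark the authors add is that this specialisation allows $C$ to be taken absolute. So your proposal must be measured against Stewart's actual proof. Your skeleton does reproduce its global architecture correctly: the lower bound $\log|\Phi_n(a)|\gg \varphi(n)\log a$, the observation that every prime divisor of $\Phi_n(a)$ other than possibly the largest prime factor of $n$ (which occurs with bounded multiplicity) is a primitive divisor of $a^n-1$ and hence satisfies $p\equiv 1\pmod n$, a count of the admissible primes up to $P$, and a $p$-adic bound on each multiplicity $v_p(\Phi_n(a))$.

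The genuine gap is in the step you dismiss as ``entirely quantitative''. With the off-the-shelf Yu-type estimate $v_p(a^n-1)\ll (p/\log p)\log n$ that you invoke, the method \emph{structurally} cannot yield the factor $\exp\bigl(\log n/(104\log\log n)\bigr)$, no matter how sharply the constants are optimised. Indeed, each summand $v_p(\Phi_n(a))\log p$ is then $\ll p\log n\le P\log n$, and Brun--Titchmarsh allows $\ll P/\bigl(\varphi(n)\log(P/n)\bigr)$ admissible primes, so your upper bound for $\log|\Phi_n(a)|$ is of order $P^2\log n/\bigl(\varphi(n)\log P\log(P/n)\bigr)$, not $P\log n/\varphi(n)$ as you wrote. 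Comparing with $\varphi(n)\log a$ and writing $P=nL$, the resulting inequality reads roughly $L^2\gg \log L$, which is satisfied with $L$ bounded: you recover only $P\gg n$, essentially the pre-2013 state of the art, and exactly why Erd\H{o}s's problem stood open for decades. Stewart's theorem is not Brun--Titchmarsh plus standard Yu; its engine is a \emph{new} valuation estimate, of the shape $v_p(\Phi_n(a))\le p\exp\bigl(-\log p/(52\log\log p)\bigr)$ up to factors depending on $a$, proved precisely by exploiting that a primitive divisor satisfies $n\mid p-1$, so that $p-1$ carries a large known divisor that can be fed into the auxiliary parameters of Yu's method. Once this per-prime saving is available, even the trivial count of $\ll P/n$ primes $\equiv 1\pmod n$ below $P$ suffices (Brun--Titchmarsh is not the bottleneck), and since $P$ enters the comparison quadratically, the saving with $52$ becomes the stated exponent with $104=2\cdot 52$. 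As written, your argument proves something strictly weaker than the statement.
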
The theorem of Stewart is much more general than what we have stated here, but we only need it in the case where $a$ is a prime number, which allows us to take $C$ as an absolute constant in~\cite[Theorem~1.1]{Stewart}.

The structure of the paper is straightforward. We divide the proof of our main results according to whether the non-abelian simple group is alternating, sporadic, classical, or exceptional.  Actually, since there are only finitely many sporadic simple groups, Theorem~\ref{thrm:1} does not require any proof when $T$ is a sporadic simple group.

In the statements of Theorems~\ref{thrm:1} and~\ref{thrm:3}, we insist that the function $f$ is increasing, in order to formulate the results in a form analogous to that of~\cite{GMP}. However, this requirement is not restrictive. Indeed, in Theorem~\ref{thrm:3}, once we have established the existence of a function $f'\colon \mathbb{N} \to \mathbb{R}$ such that 
$|T| \le f'(\mathrm{mpr}^\ast(T))$,
we can define $f\colon \mathbb{N} \to \mathbb{N}$ by 
\[
f(n) = \max\{\lceil f'(x)\rceil \mid x \le n \} + n.
\]
Clearly, $f$ is increasing in $n$, and since $f'(n) \le f(n)$ for all $n$, we also have 
$|T| \le f(\mathrm{mpr}^\ast(T))$.
Hence, the assumption that $f$ is increasing can be made without loss of generality.

\section{Preliminary results}\label{sec:numbertheory}
We start by giving a group-theoretic description of $\mathrm{mpr}^\ast(x)$.
\begin{lemma}\label{lemma1}
Let $G$ be a finite group and let $x\in G$. Then $\mathrm{mpr}^\ast(x)=\varphi({\bf o}(x))/|{\bf N}_{\mathrm{Aut}(G)}(\langle x\rangle):{\bf C}_{\mathrm{Aut}(G)}(x)|$, where $\varphi(n)$ is Euler totient function. 
\end{lemma}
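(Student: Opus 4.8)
The plan is to reduce the count of $\mathrm{Aut}(G)$-classes among the generators of $\langle x\rangle$ to an orbit computation for the normalizer $N:={\bf N}_{\mathrm{Aut}(G)}(\langle x\rangle)$. Write $A:=\mathrm{Aut}(G)$ and $n:={\bf o}(x)$, so that the generators of $\langle x\rangle$ are exactly the powers $x^i$ with $\gcd(i,n)=1$, and there are $\varphi(n)$ of them. The first step is the observation that if $\sigma\in A$ maps one generator of $\langle x\rangle$ to another, then $\langle x\rangle^\sigma=\langle x^\sigma\rangle=\langle x\rangle$, so $\sigma\in N$. Consequently two generators of $\langle x\rangle$ lie in the same $A$-class if and only if they lie in the same $N$-orbit, and hence $\mathrm{mpr}^\ast(x)$ equals the number of $N$-orbits on the set of generators of $\langle x\rangle$.

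Next I would analyse this $N$-action through the restriction homomorphism $\rho\colon N\to\mathrm{Aut}(\langle x\rangle)$, whose image $H:=\rho(N)$ is a subgroup of $\mathrm{Aut}(\langle x\rangle)\cong(\mathbb{Z}/n\mathbb{Z})^\ast$. Identifying the generator $x^i$ with the unit $i\in(\mathbb{Z}/n\mathbb{Z})^\ast$, the $N$-action becomes multiplication by $H$, so the orbits are precisely the cosets of $H$ in $(\mathbb{Z}/n\mathbb{Z})^\ast$. In particular every orbit has size $|H|$, and there are exactly $\varphi(n)/|H|$ of them. This yields $\mathrm{mpr}^\ast(x)=\varphi(n)/|H|$.

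Finally I would compute $|H|=|N:\ker\rho|$ by identifying the kernel. An automorphism $\sigma\in N$ lies in $\ker\rho$ precisely when it fixes $\langle x\rangle$ pointwise, equivalently when $x^\sigma=x$, since $x$ generates $\langle x\rangle$; thus $\ker\rho=\{\sigma\in N:x^\sigma=x\}$. As any automorphism fixing $x$ automatically normalizes $\langle x\rangle$, we have ${\bf C}_A(x)\le N$, and therefore $\ker\rho={\bf C}_A(x)$. Hence $|H|=|N:{\bf C}_A(x)|=|{\bf N}_{\mathrm{Aut}(G)}(\langle x\rangle):{\bf C}_{\mathrm{Aut}(G)}(x)|$, which combined with the previous paragraph gives the claimed formula.

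The computation is essentially formal, and the only point requiring care is the first step: one must verify that passing from $A$-classes to $N$-orbits neither merges nor splits classes, i.e.\ that an $A$-automorphism carrying one generator of $\langle x\rangle$ to another necessarily preserves $\langle x\rangle$. Everything else follows from orbit--stabilizer together with the elementary fact that a subgroup of $(\mathbb{Z}/n\mathbb{Z})^\ast$ acting by multiplication partitions the units into cosets of equal size.
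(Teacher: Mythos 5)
Your proof is correct and takes essentially the same route as the paper's: both reduce the count of $\mathrm{Aut}(G)$-classes among the generators of $\langle x\rangle$ to counting orbits of ${\bf N}_{\mathrm{Aut}(G)}(\langle x\rangle)$, all of which have size $|{\bf N}_{\mathrm{Aut}(G)}(\langle x\rangle):{\bf C}_{\mathrm{Aut}(G)}(x)|$. Your restriction homomorphism into $\mathrm{Aut}(\langle x\rangle)\cong(\mathbb{Z}/n\mathbb{Z})^\ast$, with kernel ${\bf C}_{\mathrm{Aut}(G)}(x)$, merely makes explicit the equal-orbit-size claim that the paper's terser argument leaves implicit.
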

\begin{proof}
We define an equivalence relation on the set of generators of $\langle x\rangle$, where $x_1$ is equivalent to $x_2$ if and only if $x_1^\sigma=x_2$ for some $\sigma\in\mathrm{Aut}(G)$.  Observe that $\langle x\rangle=\langle x_2\rangle=\langle x_1^\sigma\rangle=\langle x_1\rangle^\sigma=\langle x\rangle^\sigma$ and hence $\sigma\in {\bf N}_{\mathrm{Aut}(G)}(\langle x\rangle)$. Therefore, each equivalence class has cardinality $|{\bf N}_{\mathrm{Aut}(G)}(\langle x\rangle):{\bf C}_{\mathrm{Aut}(G)}(x)|$. As $\langle x\rangle$ has $\varphi({\bf o}(x))$ generators, the proof follows.
\end{proof}

\begin{lemma}\label{algebraically closed}
Let $p$ and $s$ be distinct primes, let $d,a$ be natural numbers and  let $x\in\mathrm{PGL}_d(p^a)$ be an element of order $s$ with $\gcd(s,p^a-1)=1$. Then 
$|{\bf N}_{\mathrm{PGL}_d(p^a)}{\langle x\rangle}:{\bf C}_{\mathrm{PGL}_d(p^a)}(x)|\le d$.
\end{lemma}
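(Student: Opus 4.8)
The plan is to reduce the computation in $\mathrm{PGL}_d(q)$, where $q=p^a$, to a statement about eigenvalue multiplicities of a lift of $x$ in $\mathrm{GL}_d(q)$, and then to bound $|{\bf N}:{\bf C}|$ by an orbit-counting argument. First I would record the elementary reduction: the conjugation action of ${\bf N}:={\bf N}_{\mathrm{PGL}_d(q)}(\langle x\rangle)$ on the cyclic group $\langle x\rangle$ of order $s$ has kernel ${\bf C}:={\bf C}_{\mathrm{PGL}_d(q)}(x)$, so ${\bf N}/{\bf C}$ embeds into $\mathrm{Aut}(\langle x\rangle)\cong(\mathbb{Z}/s\mathbb{Z})^\ast$ as the subgroup $H=\{k : x^g=x^k \text{ for some } g\in\mathrm{PGL}_d(q)\}$. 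Thus $|{\bf N}:{\bf C}|=|H|$, and it suffices to prove $|H|\le d$.

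Next, since $\gcd(s,q-1)=1$, I would lift $x$ to an element $\tilde x\in\mathrm{GL}_d(q)$ of order $s$: the preimage of $\langle x\rangle$ in $\mathrm{GL}_d(q)$ has order $s(q-1)$, and its Sylow $s$-subgroup (cyclic of order $s$, as $s\nmid q-1$) maps isomorphically onto $\langle x\rangle$. Because $s\ne p$, the polynomial $t^s-1$ is separable over $\overline{\mathbb{F}_p}$, so $\tilde x$ is diagonalizable with eigenvalues among the $s$-th roots of unity; as $\gcd(s,q-1)=1$, the only such root lying in $\mathbb{F}_q^\ast$ is $1$, and $\tilde x\ne 1$ forces at least one nontrivial eigenvalue to occur.

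The crucial step is to show that each $k\in H$ preserves the eigenvalue data of $\tilde x$. Given $g\in{\bf N}$ with $x^g=x^k$, I lift $g$ to $\tilde g\in\mathrm{GL}_d(q)$; then $\tilde g^{-1}\tilde x\tilde g$ and $\tilde x^k$ have the same image in $\mathrm{PGL}_d(q)$, so $\tilde g^{-1}\tilde x\tilde g=\mu\tilde x^k$ for some scalar $\mu\in\mathbb{F}_q^\ast$. Comparing eigenvalues of the two sides — those on the left are $s$-th roots of unity, while those on the right are $\mu$ times $s$-th roots of unity — forces $\mu$ to be a ratio of $s$-th roots of unity, hence $\mu^s=1$; since $\gcd(s,q-1)=1$ this yields $\mu=1$. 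This is the heart of the argument and the step where the coprimality hypothesis is essential. Consequently $\tilde x$ and $\tilde x^k$ are genuinely $\mathrm{GL}_d(q)$-conjugate, and in particular have identical characteristic polynomials, hence identical eigenvalue multiplicities over $\overline{\mathbb{F}_p}$.

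Finally I would translate this into the bound. Writing $m_j$ for the multiplicity of the eigenvalue $\zeta^j$, with $\zeta$ a fixed primitive $s$-th root of unity, the previous step shows that $j\mapsto m_j$, viewed on $(\mathbb{Z}/s\mathbb{Z})^\ast$, is invariant under multiplication by every $k\in H$. Since $H$ acts freely on $(\mathbb{Z}/s\mathbb{Z})^\ast$, each of its orbits has size $|H|$ and $m$ is constant on orbits. Choosing an orbit on which $m$ is positive (one exists, since $\tilde x$ has a nontrivial eigenvalue), its contribution to $\sum_j m_j$ is at least $|H|$; as this sum is at most $d$, we conclude $|H|\le d$, as required. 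I expect the only genuinely delicate point to be the vanishing of the scalar $\mu$; the remaining steps are routine linear algebra and orbit counting.
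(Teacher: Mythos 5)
Your proof is correct and takes essentially the same route as the paper: both lift $x$ to an element of order $s$ in $\mathrm{GL}_d(q)$ (using $\gcd(s,q-1)=1$ to split the scalars off the cyclic preimage and to kill the scalar ambiguity $\mu$), and both conclude by counting eigenvalues, of which there are at most $d$. Your multiplicity-weighted count along a free $H$-orbit in $(\mathbb{Z}/s\mathbb{Z})^{\ast}$ is a minor variant of the paper's embedding of ${\bf N}_{\mathrm{GL}_d(q)}(\langle \hat y\rangle)/{\bf C}_{\mathrm{GL}_d(q)}(\hat y)$ into $\mathrm{Sym}(E)$ with $E$ the eigenvalue set; if anything, your freeness observation (all orbits on nontrivial $s$-th roots of unity are regular, since $s$ is prime) spells out why the cyclic image there ``clearly'' has order at most $d$.
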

\begin{proof}
Let $q=p^a$, let $G=\mathrm{PGL}_d(q)$, let $\hat G=\mathrm{GL}_d(q)$ and let $Z$ be the center of $\hat G$. We consider the natural projection $\pi:\hat G\to G$. The preimage of $\langle x\rangle$ has order $s(q-1)$ and is cyclic because $s$ is relatively prime to $q-1$ and because $Z$ is cyclic.  Let $\hat x$ be an element of order $s(q-1)$  projecting to $x$ and let $\hat y=\hat x^{q-1}$. 
 
We first prove that 
\begin{align}\label{eq:29}
|{\bf N}_{\hat G}(\langle \hat y\rangle):{\bf C}_{\hat G}(\hat y)|\le d.
\end{align}
Since $\gcd(s,p)=1$, $\hat y$ is semisimple and hence $\hat y$ is diagonalizable over the algebraic closure $k$ of the field $\mathbb{F}_q$ of cardinality $q$. Let $E\subset k$ be the set of eigenvalues of $\hat y$. As the characteristic polynomial of $\hat y$ has degree $d$, we have $|E|\le d$. We now define a group homomorphism 
$$\varphi:{\bf N}_{\hat G}(\langle \hat y\rangle)\to \mathrm{Sym}(d).$$
If $g\in {\bf N}_{\hat G}(\langle \hat y\rangle)$, then $\hat y^g=\hat y^k$, for some $k\in (\mathbb{Z}/s\mathbb{Z})^{*}$. 
Note that, if $v$ is an eigenvector of $\hat y$ with eigenvalue $\mu$, then $vg$ is an eigenvector with eigenvalue $\mu^{-k}$.
Therefore, $g$ induces a permutation $\varphi(g)$ on $E$ by defining $\mu^{\varphi(g)}=\mu^{-k}$, for all $\mu\in E$. The kernel of of this homomorphism is ${\bf C}_{\hat G}(\hat y)$ and hence we have an embedding of ${\bf N}_{\hat G}(\langle \hat y\rangle)/{\bf C}_{\hat G}(\hat y)$ into $\mathrm{Sym}(d)$. Clearly, this image is cyclic of order at most $d$. This has established~\eqref{eq:29}.

Let $g\in {\bf N}_{G}(\langle x\rangle)$ and let $\hat g\in \hat G$ be an element with $\pi(\hat g)=g$. As $g$ normalizes $\langle x\rangle$, $\hat g$ normalizes $\langle\hat x\rangle$ modulo $Z$ and hence $\hat g$ normalizes $\langle\hat x^{q-1}\rangle=\langle\hat y\rangle$. This has shown that the preimage of ${\bf N}_G(\langle x\rangle)$ via $\pi$ is contained in ${\bf N}_{\hat G}(\langle \hat{y}\rangle)$. Therefore, the preimage of ${\bf N}_G(\langle x\rangle)$ via $\pi$ equals ${\bf N}_{\hat G}(\langle \hat{y}\rangle)$, because the reverse inclusion is clear.
Similarly, if  $g\in {\bf C}_{G}(x)$, then $\hat g$ centralizes  $\hat x$ modulo $Z$ and hence $\hat g$ centralizes $\hat x^{q-1}$ modulo $Z$; however, as ${\bf o}(\hat x^{q-1})$ and $|Z|$ are relatively prime, we deduce that $\hat g$ centralizes $\hat x^{q-1}=\hat y$. Therefore, the preimage of ${\bf C}_G(x)$ via $\pi$ equals ${\bf C}_{\hat G}(\hat{y})$, because the reverse inclusion is clear. Thus
$$|{\bf N}_{G}(\langle x\rangle):{\bf C}_{G}(x)|=|{\bf N}_{\hat G}(\langle \hat y\rangle):{\bf C}_{\hat G}(\hat y)|\le d.\qedhere$$
\end{proof}

\begin{lemma}[{\cite[Equation~(1)]{Siegel}}]\label{siegeltheorem}
Let $f(x)\in\mathbb{Z}[x]$ be a polynomial with at least two distinct roots. Then there exist two positive constants $c_f$ and $c_f'$ depending on $f$ only such that $P[f(q)]\ge c_f\log \log q$ for every $q\ge c_f'$.
\end{lemma}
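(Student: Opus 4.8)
The plan is to prove the statement in contrapositive, quantitative form: I would show that if $y:=P[f(q)]$ is small, then $q$ itself is forced to be small, the precise target being a bound of the shape $q\le \exp(\exp(C_f\,y))$ for a constant depending only on $f$. Taking logarithms twice then gives $\log\log q\le C_f\, y+O(1)$, hence $y\ge c_f\log\log q$ once $q\ge c_f'$, which is exactly the assertion. The whole statement is the classical theorem of Siegel, so in the paper one may simply invoke \cite[Equation~(1)]{Siegel}; what follows reconstructs it.

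First I would reduce to a statement about smooth values. The inequality $P[f(q)]\le y$ says precisely that $f(q)$ is $y$-smooth, i.e.\ $f(q)=\pm\prod_{p\le y}p^{a_p}$. Passing to the splitting field $K$ of $f$ and writing $f=c_0\prod_i (x-\alpha_i)^{e_i}$, the hypothesis that $f$ has at least two distinct roots furnishes two roots $\alpha\neq\beta$ in $K$. Let $S$ consist of the places of $K$ lying above the rational primes $\le y$, together with a fixed finite ``bad'' set (dividing $c_0$, the $\alpha_i$, their pairwise differences, and the field discriminant). Since every rational prime dividing $N_{K/\mathbb{Q}}(q-\alpha)$ divides $f(q)$ and is therefore $\le y$, the principal ideal $(q-\alpha)$ is supported on $S$, and likewise for $(q-\beta)$; after quotienting by the finitely many ideal classes of $K$ and absorbing the bounded bad part, both $q-\alpha$ and $q-\beta$ become $S$-units up to a controlled multiplicative factor.

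The key relation is then the identity $(q-\alpha)-(q-\beta)=\beta-\alpha\neq 0$, where the nonvanishing of the right-hand side uses precisely that $\alpha\neq\beta$, i.e.\ that $f$ has two distinct roots. Dividing by $\beta-\alpha$ turns this into a two-term $S$-unit equation $u+v=1$ with $u,v$ (essentially) $S$-units. I would then invoke the finiteness and size bounds for solutions of such equations: in Siegel's original spirit this comes from the Thue--Siegel approximation theorem applied to the associated Thue--Mahler equation, and alternatively one may use modern effective bounds for $S$-unit equations. Any such input that is at most doubly exponential in $|S|$ suffices, since $|S|\le [K:\mathbb{Q}]\,\pi(y)\le C_f\,y$, and bounding the height of $q-\alpha$ then bounds $|q|$ by $\exp(\exp(C_f y))$, completing the reduction.

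The main obstacle is the heart of the matter, namely the bound on solutions of the $S$-unit (Thue--Mahler) equation: this is exactly where Siegel's approximation theorem, or later Baker's theory of linear forms in logarithms, enters, and it is what forces the existence of the threshold $c_f'$. A secondary but real difficulty is bookkeeping, as one must fold the leading coefficient $c_0$, the multiplicities $e_i$, the class number of $K$, and the bad set into constants depending on $f$ alone, and check that the dependence on $|S|\asymp\pi(y)$ stays at worst doubly exponential, so that the double logarithm — rather than some weaker function — emerges. Because the result is classical and this transcendence-theoretic input is well beyond the elementary tools of the present note, in the paper itself I would simply cite \cite[Equation~(1)]{Siegel}.
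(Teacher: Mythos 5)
The paper gives no proof of this lemma at all: it is quoted directly from Shorey--Tijdeman \cite[Equation~(1)]{Siegel}, which is precisely what you conclude one should do, and your reconstruction --- reducing $P[f(q)]\le y$ to a two-term $S$-unit equation via the identity $(q-\alpha)-(q-\beta)=\beta-\alpha$ with $\alpha\ne\beta$ two distinct roots, then invoking height bounds for its solutions that are at most doubly exponential in $|S|\le C_f\,\pi(y)$ --- is the standard argument behind that citation. One caveat on your hedged choice of input: the ineffective Thue--Siegel/Thue--Mahler finiteness theorem alone yields only $P[f(q)]\to\infty$ with no rate (finiteness gives no control on how the largest exceptional $q$ grows with $y$), so of your two suggested inputs only the effective Baker-type $S$-unit bounds actually meet your own, correctly identified, requirement of a size bound of the shape $q\le\exp(\exp(C_f\,y))$.
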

As far as we are aware, very little is known about estimates for $c_f$ and $c_{f'}$. These seem to constitute a difficult number-theoretic problem. For instance, Keates~\cite[Theorem~I]{keates} shows that when $f$ has degree $2$ or $3$, one may take $c_f = 10^{-7}$ for sufficiently large values of $c_{f'}$ (which are not determined).
	
\section{Proof of Theorem~\ref{thrm:1} for alternating  groups}\label{sec:spoalt}
Let $T$ be the alternating group $\mathrm{Alt}(m)$ of degree $m$, for some $m \ge 5$. The arguments in~\cite[Section~3]{GMP} can be used to obtain a proof of Theorem~\ref{thrm:1} for $\mathrm{Alt}(m)$,  we repeat here some of the ideas.

 When $m \ne 6$, $T$ has $\lfloor m/4 \rfloor$ $\mathrm{Aut}(T)$-classes of involutions, because $\mathrm{Aut}(T)$ is the symmetric group $\mathrm{Sym}(m)$, and two elements in $\mathrm{Sym}(m)$ are conjugate if and only if they have the same cycle structure. When $m = 6$, $T = \mathrm{Alt}(6)$ has one $\mathrm{Aut}(T)$-class of involutions. 

Let $h:\mathbb{N}\to \mathbb{N}$ be defined by $h(m) = \lfloor m/4 \rfloor$. Therefore, 
\[
\mathrm{mpr}(T) \ge \mathrm{mpr}_2(\mathrm{Alt}(m)) \ge h(m),
\]
and hence $
7\,\mathrm{mpr}(T) \ge 7h(m) \ge m$,
where the last inequality follows directly from the definition of $h(m)$. Thus, 
$|T| = m!/2 \le (7\,\mathrm{mpr}(T))!/2$,
and Theorem~\ref{thrm:1} is now proven for alternating groups.

\section{Proof of Theorem~\ref{thrm:3} for classical groups}\label{sec:classical}

Let $a$ and $d$ be integers with $a,d\ge 2$ and let $p$ be a prime divisor of $a^d-1$. Then, $p$ is said to be a primitive prime divisor if $p$ is relatively prime to $a^{k}-1$, for each $k\in \{1,\ldots,d-1\}$. Zsigmondy~\cite{zs} has proved that primitive prime divisors exist, except when $(a,d)=(2,6)$ or when $d=2$ and $a+1$ is a power of $2$.

Let $p$ be a prime number and let $a$ and $d$ be a positive integers with $d\ge 2$. Let $q=p^a$. In this section, we suppose that $T$ is one of the following groups:
\begin{itemize}
\item $T=\mathrm{PSL}_d(q)$, with $(q,d)\notin\{(2,2),(3,2),(4,2),(7,2)\}$,
\item $T=\mathrm{PSp}_d(q)$, with $d$ even, $d\ge 4$ and $(q,d) \ne (2,4)$,
\item $T=\mathrm{PSU}_d(q)$, with $d\ge 3$ and $(q,d)\ne (2,3)$,
\item $T=\Omega_d(q)$, with $d\ge 7$ and $dp$ odd,
\item $T=\mathrm{P}\Omega_d^\pm(q)$, with $d\ge 8$ and $d$ even.
\end {itemize}
We also let $\delta=2$ when $T$ is unitary, and $\delta=1$ otherwise.

When $T=\mathrm{PSL}_d(q)$, $T=\mathrm{PSU}_d(q)$ with $d$ odd,  $T=\mathrm{PSp}_d(q)$ or $T=\mathrm{P}\Omega_d^-(q)$, we let $s$ be the largest primitive prime divisor of $p^{ad\delta}-1$. The existence of $s$ is guaranteed by Zsigmondy's theorem~\cite{zs}, except when $T=\mathrm{PSL}_2(p)$ and $p$ is a Mersenne prime, and except when $T\in\{\mathrm{PSL}_2(8),\mathrm{PSL}_3(4),\mathrm{PSU}_3(4),\mathrm{PSL}_6(2),\mathrm{PSp}_6(2)\}$. In the proof of Theorem~\ref{thrm:3}, we may exclude any finite number of cases and hence we may exclude the groups where $(p,ad)=(2,6)$ from further considerations. Whereas, we will consider the case $T=\mathrm{PSL}_2(p)$ and $p$ is a Mersenne prime at the end of the proof.

When $T=\mathrm{PSU}_d(q)$ with $d$ even or when $T=\Omega_d(q)$ with $dq$ odd, we let $s$ be the largest primitive prime divisor of $p^{a(d-1)\delta}-1$. The existence of $s$ is guaranteed by Zsigmondy's theorem~\cite{zs}, with the exception of $\mathrm{PSU}_4(2)$.

When $T=\mathrm{P}\Omega_d^+(q)$, we let $s$ be the largest primitive prime divisor of $p^{a(d-2)}-1$. Again, the existence of $s$ is guaranteed by Zsigmondy's theorem~\cite{zs}, except when $T=\mathrm{P}\Omega_8^+(2)$. Clearly, as above, for the proof of Theorem~\ref{thrm:3}, we may exclude the group $\mathrm{PSU}_4(2)$ and $\mathrm{P}\Omega_8^+(2)$ from further considerations.

Let $B=\mathrm{Aut}(T)\cap \mathrm{P}\Gamma\mathrm{L}_d(q^\delta)$. From~\cite[page~xvi]{atlas} and~\cite[Chapter~2]{kl}, we have \begin{equation}\label{eq:28}|\mathrm{Aut}(T):B|\le 6.\end{equation} 

Now, let $x\in T$ with ${\bf o}(x)=s$ and observe that $\gcd(s,q^\delta-1)=1$. As $B\le \mathrm{P}\Gamma\mathrm{L}_d(q^\delta)$, we have
$$|{\bf N}_{B}(\langle x\rangle):{\bf C}_{B}(x)|\le |{\bf N}_{\mathrm{P}\Gamma\mathrm{L}_d(q^\delta)}(\langle x\rangle):{\bf C}_{\mathrm{P}\Gamma\mathrm{L}_d(q^\delta)}(x)|.$$
Thus, from~\eqref{eq:28} and from Lemma~\ref{algebraically closed}, we deduce
$$|{\bf N}_{\mathrm{Aut}(T)}(\langle x\rangle):{\bf C}_{\mathrm{Aut}(T)}(x)|\le 6d\delta a.$$
Now, Lemma~\ref{lemma1} implies
\begin{equation}\label{ppd}
\mathrm{mpr}^\ast(x)\ge \frac{s-1}{6a\delta d}.
\end{equation} 

As $s$ is a primitive prime divisor, we deduce that $s$ is a prime divisor of either $\Phi_{ad\delta}(p)$, $\Phi_{a(d-1)\delta}(p)$, or $\Phi_{a(d-2)}(p)$, depending on the type of $T$. Let $b$ be either $ad\delta$, $a(d-1)\delta$, or $a(d-2)$ such that $s$ is a divisor of $\Phi_b(p)$.

Let $C$ be the constant appearing in Theorem~\ref{Stewart} and let $C'=\max(C,e^e)$. If $b>C'$, then 
\begin{align*}
\mathrm{mpr}^\ast(T)&\ge \frac{b\exp\left(\frac{\log(b)}{104\log \log (b)}\right)-1}{6ad\delta}\ge \frac{b}{6ad\delta}\exp\left(\frac{\log(b)}{104\log\log (b)}\right)-1\\
&\ge\frac{1}{12}\exp\left(\frac{\log(b)}{104\log\log (b)}\right)-1,
\end{align*}
where the last inequality follows from the fact that $b/6ad\delta\ge 1/12$.
This yields $\exp(\log(b)/104\log\log(b))\le 12(\mathrm{mpr}^\ast(T)+1)$ and hence $$\frac{\log (b)}{\log\log(b)}\le 104\log(12(\mathrm{mpr}^\ast(T)+1)).$$
As $x\mapsto \log x/\log\log x$ is an increasing function for $x>e^e$, we deduce 
$b\le f_1(\mathrm{mpr}^\ast(T))$, for some function $f_1:\mathbb{N}\to\mathbb{R}$. In particular, as $ad\delta\le 2b$, we get $ad\delta\le 2f_1(\mathrm{mpr}^\ast(T))$. If we let $f_2(x)=2f_1(x)+C'$, then $ad\delta\le f_2(x)$, regardless of whether $b>C'$ or $b\le C'$. 

Assume $b\ge 3$. For each $n\in\{3,\ldots,b\}$,  let $c_{\Phi_n}$ and $c_{\Phi_n}'$ be the constants appearing in Lemma~\ref{siegeltheorem} applied with $f=\Phi_n$ and let 
\begin{align*}
c&=\min\{c_{\Phi_n}\mid 3\le n\le f_1(\mathrm{mpr}^\ast(T))\},\\
c'&=\max\{c_{\Phi_n}'\mid 3\le n\le f_1(\mathrm{mpr}^\ast(T))\}.
\end{align*}In particular, $c$ and $c'$ are two constants depending only on $\mathrm{mpr}^\ast(T)$. If $p\ge c'$, then  from Lemma~\ref{siegeltheorem}, we have $s\ge c\log\log p$. Thus from~\eqref{ppd} we deduce
\begin{align*}
c\log\log p-1&\le s-1\le 6\mathrm{mpr}^\ast(T)ad\delta\le 6\mathrm{mpr}^\ast(T)f_2(\mathrm{mpr}^\ast(T)).
\end{align*}
Thus $p\le \tilde f_3(\mathrm{mpr}^\ast(T))$, where $$\tilde f_3(x)=\frac{\exp(\exp( 6xf_2(x)+1))}{c}.$$
If we let $f_3(x)=\max\{c',\tilde f_3(x)\}$, then $p\le f_3(\mathrm{mpr}^\ast(T))$, regardless of whether $p\ge c'$ or $p<c'$. As $|T|\le q^{\delta d^2}\le p^{2ad^2}$, we deduce that $|T|\le f_4(\mathrm{mpr}^\ast(T))$, where $f_4(x)=f_3(x)^{2(f_2(x))^2}$.

Assume now $b=2$, that is, $b=ad$, $a=1$, $d=2$ and $T=\mathrm{PSL}_2(p)$. We are including in this case also the possibility that $p^2-1$ admits no primitive prime divisor, that is, $p$ is a Mersenne prime.
 From Lemma~\ref{siegeltheorem}, there exist two absolute constants $v$ and $v'$ such that $P[p^2-1]\ge v\log \log p$ when $p\ge v'$. Let $v''=\max\{3,v',\exp(\exp(2/v))\}$. If $p\le v''$, then $|T|=p(p^2-1)/\gcd(2,p-1)$ is bounded above by the absolute constant $v''^3$. Assume then $p> v''$ and let $s=P[p^2-1]$. Thus $p>v'>2$ and $s>2$. Now, if $s$ divides $p+1$, Lemmas~\ref{lemma1} and~\ref{algebraically closed} imply $\mathrm{mpr}^\ast(T)\ge (s-1)/2\ge (v\log\log p-1)/2$. Thus $p$ is bounded above by a function of $\mathrm{mpr}^\ast(T)$ and hence so is $|T|.$ If $s$ divides $p-1$, let $x\in T=\mathrm{PSL}_2(p)$ be the projective image of the matrix
\[
\begin{pmatrix}
\zeta&0\\
0&\zeta^{-1}
\end{pmatrix}
,\]
where $\zeta\in\mathbb{Z}/p\mathbb{Z}$ has order $s$ in the multiplicative group of $\mathbb{Z}/p\mathbb{Z}$.
A direct computation in $\mathrm{PSL}_2(p)$ (using ${\bf o}(\zeta)>2$) gives ${\bf N}_{\mathrm{Aut}(T)}(\langle x\rangle)$ is a dihedral group of order $2(p-1)$ and ${\bf C}_{\mathrm{Aut}(T)}(x)$ is a cyclic group of order $p-1$. Therefore, from Lemma~\ref{lemma1}, we have $\mathrm{mpr}^\ast(x)\ge (s-1)/2\ge (c\log\log p-1)/2$. As above, we deduce that $p$ and  hence $|T|$ are bounded above by a function of $\mathrm{mpr}^\ast(T)$.

\section{Proof of Theorem~\ref{thrm:3} for exceptional groups of Lie type}\label{sec:exceptional}

Let $T$ be an exceptional group of Lie type and let $s=P[\phi_{ab}(p)]$, where $b\ge 3$ and $s$ divides $|T|$. The existence of $b$ and hence of $s$ can be deduced from the formulae of the order of the exceptional groups of Lie type, see for instance~\cite[page~xvi]{atlas}. For example, when $T=E_8(p^a)$, we may take $b=30$, because $p^{30a}-1$ is a divisor of the order of $T$.

Let $x\in T$ with ${\bf o}(x)=s$ and let $b$ be the minimum dimension of a faithful linear representation of $T$ over a field of cardinality $p^a$. Observe that $b$ is bounded above by the Lie rank of $T$ and hence it is bounded above by an absolute constant. For instance, when $T=E_8(p^a)$, we may take $b=248$, see for istance~\cite[page~200]{kl}.

In particular, $T$ admits an embedding in $\mathrm{PGL}_b(p^a)$. Clearly, $|{\bf N}_T(\langle x\rangle):{\bf C}_T(x)|\le |{\bf N}_{\mathrm{PGL}_{b}(p^a)}(\langle x\rangle):{\bf C}_{\mathrm{PGL}_{b}(p^a)}(x)|\le b$, where the last inequality follows from Lemma~\ref{algebraically closed}. Thus, from Lemma~\ref{lemma1}, we have $$\mathrm{mpr}^\ast(T)\ge \frac{s-1}{|\mathrm{Out}(T)|b}\ge \frac{s-1}{2ab},$$ where the last inequality follows from~\cite[Table~5, page~xvi]{atlas}. 
We may now argue as for the classical groups. Indeed, using Theorem~\ref{Stewart}, we may bound $a$ above as a function of $\mathrm{mpr}^\ast(T)$ and, once $a$ is bounded, we may use Lemma~\ref{siegeltheorem} to bound $p$ from above as a function of $\mathrm{mpr}^\ast(T)$.

\section*{Acknowledgements}
 The second author is funded by the European Union via the Next
Generation EU (Mission 4 Component 1 CUP B53D23009410006, PRIN 2022, 2022PSTWLB, Group
Theory and Applications).
\thebibliography{12}
\bibitem{atlas} J.~H.~Conway, R.~T. Curtis, S.~P.~Norton, R.~A.~Parker and R.~A.~Wilson, An $\mathbb{ATLAS}$ of Finite Groups \textit{Clarendon Press, Oxford}, 1985; reprinted with corrections 2003.
\bibitem{GMP}M.~Giudici, L.~Morgan, C.~E.~Praeger, Finite simple groups have many classes of $p$-elements, \textit{Pacific J. Math.} \textbf{336} (2025), 137--160.
\bibitem{FPS}M.~Fusari, A.~Previtali, P.~Spiga, Cliques in derangement graphs for innately transitive groups,
\textit{J. Group Theory} \textbf{27} (2024), no. 5, 929--965.

\bibitem{keates}M.~Keates, On the greatest prime factor of a polynomial, \textit{Proc. Edinburgh Math. Soc.}\textbf{16} (1968), 301--303.
\bibitem{kl} P.~B.~Kleidman, M.~W.~Liebeck, \textit{The subgroup
structure of the finite classical groups}, London Math. Soc.
Lecture Notes {\bf 129}, Cambridge University Press, 1990.
\bibitem{Pyber}L.~Pyber, Finite groups have many conjugacy classes, \textit{J. London Math. Soc.} \textbf{46} (1992), 239--249.

\bibitem{Siegel}T.~N.~Shorey, R.~Tijdeman, On the greatest prime factors of polynomials
at integer points, \textit{Comp. Math. }\textbf{33} (1976), 187--195.

\bibitem{Stewart}C.~L.~Stewart, On divisors of Lucas and Lehmer numbers, \textit{Acta Math.} \textbf{211} (2013), 291--314.
\bibitem{zs} K.~Zsigmondy, Zur Theorie der Potenzreste, \textit{Monathsh. Fur Math. u. Phys.} \textbf{3} (1892), 265--284.

\end{document}